\newtheorem{thm}{Theorem}[section]
\newtheorem{cor}[thm]{Corollary}
\newtheorem{prop}[thm]{Proposition}
\newtheorem{exam}[thm]{Example}
\numberwithin{equation}{section}
\begin{document}

\title{On Hirano inverses in rings}

\author{Huanyin Chen}
\author{Marjan Sheibani}
\address{
Department of Mathematics\\ Hangzhou Normal University\\ Hang -zhou, China}
\email{<huanyinchen@aliyun.com>}
\address{
Women's University of Semnan (Farzanegan), Semnan, Iran}
\email{<sheibani@fgusem.ac.ir>}

\subjclass[2010]{15A09, 16E50, 	15A23.} \keywords{Drazin inverse, nilpotent; tripotent; Clines formula; matrix ring.}

\begin{abstract}
We introduce and study a new class of Drazin inverses. An element $a$ in a ring has Hirano inverse $b$ if
$a^2-ab\in N(R), ab=ba~\mbox{and}~b=bab.$  Every Hirao inverse of an element is its Drazin inverse.
We derive several characterizations for this generalized inverse.
An element $a\in R$ has Hirano inverse if and only if $a^2$ has strongly Drazin inverse, if and only if $a-a^3\in N(R)$.
If $\frac{1}{2}\in R$, we prove that $a\in R$ has Hirano inverse if and only if there exists $p^3=p\in comm^2(a)$ such that $a-p\in N(R)$, if and only if
there exist two idempotents $e,f\in comm^2(a)$ such that $a+e-f\in N(R)$. Clines formula and additive results for this generalized inverse are thereby obtained.
\end{abstract}

\maketitle

\section{Introduction}

Let $R$ be an associative ring with an identity. An element $a$ in $R$ is said to be Drazin invertible provided that there is a common solution to
the equations
$a^k=a^{k+1}x, x=xax, ax=xa$ for some $k\in {\Bbb N}$. As is well known, an element $a\in R$ has Drazin inverse if there exists $b\in R$ such that $$a-a^2b\in N(R), ab=ba~\mbox{and}~b=bab.$$  The preceding $b$ is unique, if such element exists. As usual,
it will be denoted by $a^D$, and called the Drazin inverse of $a$.

The Drazin inverse is an important tool in ring theory and Banach algebra. It is very useful in matrix theory and in various applications in matrices (see~\cite{W, Y, WD}). The purpose of this paper is to introduce and study a new class of Drazin inverses. An element $a\in R$ has Hirano inverse if there exists $b\in R$ such that $$a^2-ab\in N(R), ab=ba~\mbox{and}~b=bab.$$  The preceding $b$ shall be unique, if such element exists, we denote it by $a^H$, and call the
Hirano inverse of $a$. We observed these inverses form a class of Drazin inverses which is related to tripotents. Here, $p\in R$ is a tripotent if $p^3=p$ (see~\cite{AC}).

Following ~\cite{W}, An element $a\in R$ has strongly Drazin inverse $b$ if
$$a-ab\in N(R), ab=ba~\mbox{and}~b=bab.$$
In Section 2, We prove that every Hirano inverse of an element is its Drazin inverse.
An element $a\in R$ has Hirano inverse if and only if $a^2$ has strongly Drazin inverse,

In Section 3 we characterize Hirano inverses via tripotents and nilpotents. It is shown that an element $a\in R$ has Hirano inverse if and only if $a-a^3\in N(R)$.
If $\frac{1}{2}\in R$, we prove that $a\in R$ has Hirano inverse if and only if there exists $p^3=p\in comm^2(a)$ such that $a-p\in N(R)$, if and only if
there exist two idempotents $e,f\in comm^2(a)$ such that $a+e-f\in N(R)$.

Let $R$ be a ring, and let $a,b,c\in R$ with $aba=aca$. In Section 4, we establish Clines formula for Hirano inverses. We prove that
$ac$ has Hirano inverse if and only if $ba$ has Hirano inverse. In the last section, we are concern on generalized Jacobson Lemma. We prove that $1+ac$ has Hirano inverse if and only if $1+ba$ has Hirano inverse. Related multiplicative and additive results are also obtained.

Throughout, all rings are associative with an identity. We use $N(R)$ to denote the set of all nilpotent elements in $R$.
The double commutant of $a\in R$ is defined by $comm^2(a)=\{x\in R~|~xy=yx~\mbox{if}~ay=ya~\mbox{for}~y\in R\}$. ${\Bbb N}$ stands for the set of all natural numbers.

\section{Drazin inverse}

The aim of this section is to investigate the relations among Drazin inverse, strongly Drazin inverse and Hirano inverse. We have

\begin{thm} Let $R$ be a ring and $a\in R$. If $a$ has Hirano inverse, then $a$ has Drazin inverse.\end{thm}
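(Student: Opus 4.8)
The plan is to show that the Hirano inverse $b$ of $a$ serves directly as its Drazin inverse, so that in fact $a^D = b$. Two of the three defining conditions for the Drazin inverse, namely $ab = ba$ and $b = bab$, are already built into the definition of the Hirano inverse and require no work. Thus the entire task reduces to verifying the single remaining condition $a - a^2 b \in N(R)$, starting from the hypothesis $a^2 - ab \in N(R)$.

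First I would introduce the element $e = ab = ba$ and record that it is an idempotent: using associativity together with $b = bab$, one gets $e^2 = abab = a(bab) = ab = e$. Since $ab = ba$, this $e$ commutes with $a$ (both $ae$ and $ea$ reduce to $a^2 b$), and consequently $e$ also commutes with $a^2 - ab$. These commutation facts are what make the rest of the argument go through.

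Next I would rewrite the target expression in terms of $e$, namely $a - a^2 b = a - a(ab) = a - ae = a(1-e)$. Because $a$ and $e$ commute and $1 - e$ is idempotent, powers collapse: $(a(1-e))^2 = a^2(1-e)$, and since $e(1-e) = 0$ this equals $(a^2 - ab)(1-e)$. Writing $w = a^2 - ab \in N(R)$, which commutes with $1-e$, I would iterate to obtain $(a(1-e))^{2k} = w^k(1-e)$ for every $k$. Choosing $k$ with $w^k = 0$ then forces $(a - a^2 b)^{2k} = 0$, so $a - a^2 b \in N(R)$, and all three Drazin conditions hold with inverse $b$.

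I do not anticipate a serious obstacle; the argument is a short Peirce-corner computation. The one point that needs genuine care is the commutativity bookkeeping, that is, confirming that $e$, and hence $1-e$, really does commute both with $a$ and with $w = a^2 - ab$, since this is exactly what licenses pushing the idempotent $1-e$ through the powers and factoring $(w(1-e))^k = w^k(1-e)$. Once that is secured, the nilpotency of $w$ transfers immediately to $a - a^2 b$ and the theorem follows.
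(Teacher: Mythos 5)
Your argument is correct and is essentially the paper's own proof in a slightly cleaner packaging: both reduce the problem to showing $a-a^2b=a(1-ab)\in N(R)$, square to bring in the nilpotent $w=a^2-ab$ multiplied by the commuting idempotent $1-ab$ (the paper writes this idempotent as $a^2b^2$, which equals your $e=ab$), and then transfer nilpotency. No gaps; the commutativity bookkeeping you flag is exactly what the paper also relies on.
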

\begin{proof} Suppose that $a$ has Hirano inverse $b$. Then $a^2-ab\in N(R)$, $ab=ba$ and $bab=b$.
Hence, $a^2-a^2b^2=a^2-a(bab)=a^2-ab\in N(R)$, and so $$a^2(1-a^2b^2)=(a^2-a^2b^2)(1-a^2b^2)\in N(R).$$ It follows that $a(a-a^2b)=a^2(1-a^2b^2)\in N(R)$; hence, $(a-a^2b)^2=a(a-a^2b)(1-ab)\in N(R)$. Thus, $a-a^2b\in N(R)$. Moreover, we see that
$ab=ba$ and $bab=b$. Therefore, $a$ has Drazin inverse $b$, as desired.\end{proof}

\begin{cor} Let $R$ be a ring and $a\in R$. Then $a$ has at most one Hirano inverse in $R$, and if the Hirano inverse
of $a$ exists, it is exactly the Drazin inverse of $a$, i.e., $a^H=a^D$.\end{cor}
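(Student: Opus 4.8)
The plan is to derive both assertions directly from Theorem 2.1 together with the uniqueness of the Drazin inverse recalled in the introduction. The first thing I would point out is that the proof of Theorem 2.1 establishes slightly more than its bare statement: starting from a Hirano inverse $b$ of $a$, it verifies the three defining relations of the Drazin inverse for that same $b$, namely $a-a^2b\in N(R)$, $ab=ba$ and $b=bab$. Thus every Hirano inverse $b$ of $a$ is, in particular, a Drazin inverse of $a$.

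Next I would invoke the classical fact, noted in the introduction, that the Drazin inverse of an element is unique whenever it exists. Combining this with the previous observation, any Hirano inverse $b$ of $a$ must coincide with the Drazin inverse $a^D$, which yields the identity $a^H=a^D$. Uniqueness of the Hirano inverse then follows immediately: if $b_1$ and $b_2$ were both Hirano inverses of $a$, the preceding step would force $b_1=a^D$ and $b_2=a^D$, hence $b_1=b_2$.

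I do not expect a genuine obstacle here, since the corollary is essentially a repackaging of Theorem 2.1. The only point deserving a moment's care is to read off from the proof of Theorem 2.1 that the \emph{same} element $b$ satisfies exactly the Drazin relations, rather than merely asserting that \emph{some} Drazin inverse exists; once this is made explicit, the uniqueness of the Drazin inverse does all the remaining work and both claims drop out at once.
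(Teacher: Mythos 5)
Your proposal is correct and follows essentially the same route as the paper: both deduce from (the proof of) Theorem 2.1 that a Hirano inverse $b$ of $a$ satisfies the Drazin relations for that same $b$, and then conclude via the uniqueness of the Drazin inverse. Your explicit remark that one must read off that the \emph{same} element $b$ works is a worthwhile clarification of a point the paper leaves implicit, but it is not a different argument.
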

\begin{proof} Let $a$ has Hirano inverse $x$. In view of Theorem 2.1, $a$ has Drazin inverse. Furthermore, we see that $a^D=a^H$. By virtue of~\cite[Theorem 2.4]{W}, the Drazin inverse of $a$ is unique. Thus, $a$ has at most one Hirano inverse in $R$, as desired.\end{proof}

\begin{exam} Every $2\times 2$ matrix over ${\Bbb Z}_2$ has Drazin inverse, but $a=\left(
\begin{array}{cc}
0&1\\
1&1
\end{array}
\right)\in M_2({\Bbb Z}_2)$ has no Hirano inverse.\end{exam}
\begin{proof} Since $M_2({\Bbb Z}_2)$ is a finite ring, every element in $M_2({\Bbb Z}_2)$ has Drazin inverse. If $a=\left(
\begin{array}{cc}
0&1\\
1&1
\end{array}
\right)\in M_2({\Bbb Z}_2)$ has Hirano inverse, it follows by Corollary 2.2 that $a^H=a^D=\left(
\begin{array}{cc}
-1&1\\
1&0
\end{array}
\right)$. But $a^2-aa^H=\left(
\begin{array}{cc}
0&1\\
1&1
\end{array}
\right)$ is not nilpotent. This gives a contradiction, and we are through.\end{proof}

\begin{thm} Let $R$ be a ring and $a\in R$. Then $a$ has Hirano inverse if and only if $a^2$ has strongly Drazin inverse. In this case, $(a^2)^{sD}=(a^H)^2$ and $a^H=a(a^2)^{sD}$.\end{thm}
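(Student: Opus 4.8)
The plan is to prove both implications constructively and then invoke the uniqueness from Corollary 2.2 to pin down the two formulas. For the forward direction I would assume $a$ has Hirano inverse $b$ (so $a^2-ab\in N(R)$, $ab=ba$, $b=bab$) and guess that $b^2$ is the strongly Drazin inverse of $a^2$. The crucial preliminary computation is that $a^2b^2=ab$: since $a$ and $b$ commute, $a^2b^2=(ab)^2=a(bab)=ab$, the last step using $b=bab$. With this identity in hand the three defining conditions for $(a^2)^{sD}=b^2$ fall out at once: $a^2-a^2b^2=a^2-ab\in N(R)$; commutativity $a^2b^2=b^2a^2$ is clear because $a,b$ commute; and $b^2a^2b^2=b^2(ab)=b(bab)=b^2$. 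Hence $a^2$ has strongly Drazin inverse $b^2$, giving $(a^2)^{sD}=b^2=(a^H)^2$. I would also record that $ab^2=(ab)b=bab=b$, so that $a(a^2)^{sD}=b=a^H$.

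For the converse I would assume $a^2$ has strongly Drazin inverse $c$ (so $a^2-a^2c\in N(R)$, $a^2c=ca^2$, $c=ca^2c$) and propose $ac$ as the Hirano inverse of $a$. Two of the three conditions are essentially free: $a^2-a(ac)=a^2-a^2c\in N(R)$ directly, and once we know $a$ and $c$ commute, the computation $(ac)a(ac)=a^3c^2=a(ca^2c)=ac$ (using $c=ca^2c$) delivers $b=bab$, while $a(ac)=(ac)a$ gives $ab=ba$.

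The main obstacle, and the only genuinely nontrivial point, is establishing that $c$ commutes with $a$; the defining relations $a^2c=ca^2$ and $c=ca^2c$ only grant commutation with $a^2$, not with $a$ itself. To bridge this gap I would use that a strongly Drazin inverse is in particular a Drazin inverse: writing $e=a^2c$, the relation $c=ca^2c$ gives $e^2=a^2(ca^2c)=a^2c=e$, so $e$ is an idempotent commuting with $a^2$ with $a^2-e\in N(R)$, and then $a^2-(a^2)^2c=a^2(1-e)=(a^2-e)(1-e)\in N(R)$ shows $c$ is the Drazin inverse $(a^2)^D$. Invoking the double-commutant property of the Drazin inverse (from \cite{W}), $c=(a^2)^D\in comm^2(a^2)$; since $a$ commutes with $a^2$, this forces $ac=ca$, exactly what the remaining steps require. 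Finally, having produced a Hirano inverse $ac$ of $a$, Corollary 2.2 guarantees it is unique, so $a^H=ac=a(a^2)^{sD}$, and combining this with the forward direction yields $(a^2)^{sD}=(a^H)^2$.
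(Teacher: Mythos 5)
Your proof is correct and follows essentially the same route as the paper: $b^2=(a^H)^2$ as the strongly Drazin inverse of $a^2$ in the forward direction, and $ac$ as the Hirano inverse of $a$ in the converse. The one point where you diverge is in fact an improvement: the paper simply asserts $ax=xa$ for the strongly Drazin inverse $x$ of $a^2$, although the definition only supplies $a^2x=xa^2$; your detour through the idempotent $e=a^2c$, the identification $c=(a^2)^D$, and the double-commutant property of the Drazin inverse legitimately closes that gap.
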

\begin{proof} $\Longrightarrow$ Since $a$ has Hirano inverse, we have $b\in R$ such that $a^2-ab\in N(R), ab=ba$ and $b=bab$. Thus, $a^2-a^2b^2=a^2-a(bab)=a^2-ab\in N(R), a^2b^2=b^2a^2$ and $b^2a^2b^2=b^2$. Therefore $a^2\in R$ has strongly Drazin inverse. Moreover, $(a^2)^{sD}=(a^H)^2$.

$\Longleftarrow$ Since  $a^2$ has strongly Drazin inverse, we have $x\in R$ such that $a^2-a^2x\in N(R), ax=xa$ and $x=xa^2x$. Set $b=ax$. Then
$a^2-ab=a^2-a^2x\in N(R), ab=ba$ and $bab=(ax)a(ax)=a(xa^2x)=ax=b$. Hence, $a$ has Hirano inverse $b$. Furthermore, $a^H=a(a^2)^{sD}$.\end{proof}

\begin{exam} Every element in ${\Bbb Z}_3$ has Hirano inverse, but $2\in {\Bbb Z}_3$ has no strongly Drazin inverse.
\end{exam}
\begin{proof} One directly checks that every element in ${\Bbb Z}_3$ has Hirano inverse.
If $2\in {\Bbb Z}_3$ has strongly Drazin inverse, then $2=2(2^D)$ and $2^D=2(2^D)^2$, whence $-1=-2^D$, and so $2^D=1$. This gives a contradiction.
This shows that $2\in {\Bbb Z}_3$ has no strongly Drazin inverse.\end{proof}

\begin{exam} Let $a=
\left(
\begin{array}{ccc}
-2&3&2\\
-2&3&2\\
1&-1&-1
\end{array}
\right)\in M_3({\Bbb Z})$. Then $a$ has Hirano inverse, but it has no strongly Drazin inverse.\end{exam}
\begin{proof} Clearly, $a=a^3$, and so $a^2$ has strongly Drazin inverse $a^2$, thus $a$ has Hirano inverse, by Theorem 2.1. On the other hand, the
characteristic polynomial $\chi(a-a^2)=t^2(t+2)\not\equiv t^3 (mod~N(R))$. In light of ~\cite[Proposition 4.2]{D}, $a-a^2$ is not nilpotent.
Therefore $A$ does not have strongly Drazin inverse by~\cite[Lemma 2.2]{W}.\end{proof}

Thus, we have the relations of various type of inverses in a ring by the following: $$\{\mbox{strongly Drazin inverses} \}\subsetneq \{ \mbox{Hirano inverses} \}\subsetneq \{ \mbox{Drazin inverses} \}.$$

\begin{exam} Let ${\Bbb C}$ be the field of complex numbers.

$(1)$ $2\in {\Bbb C}$ is invertible, but $2$ has no Hirano inverse in ${\Bbb C}$.

$(2)$ $A\in M_n({\Bbb C})$ has Hirano inverse if and only if there exists a decomposition ${\Bbb C}^n\oplus {\Bbb C}^n=U\oplus V$ such that $U,V$ are $A$-invariant and
$A^2\mid_{U}, (I_n-A^2)\mid_{V}$ are both nilpotent if and only if eigenvalues of $A^2$ are only $0$ or $1$.
\end{exam}
\begin{proof} $(1)$ Straightforward.

$(2)$ In view of Theorem 2.4, $A\in M_n({\Bbb C})$ has Hirano inverse if and only if $A^2\in M_n({\Bbb C})$ has strongly Drazin inverse if and only if $A^2$ is the sum of an idempotent and an nilpotent matrices that commute. Therefore we are done by~\cite[Lemma 2.2]{W}.\end{proof}

\section{Characterizations}

The purpose of this section is to characterize Hirano inverses by means of tripotents and nilpotents in a ring. The following result is crucial.

\begin{thm} Let $R$ be a ring and $a\in R$. Then $a$ has Hirano inverse if and only if $a-a^3\in N(R)$.
\end{thm}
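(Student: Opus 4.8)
The plan is to route the whole statement through Theorem 2.4, which already converts ``$a$ has Hirano inverse'' into ``$a^2$ has strongly Drazin inverse.'' The first step is therefore to isolate the clean nilpotent characterization of strong Drazin invertibility: an element $x\in R$ has strongly Drazin inverse if and only if $x-x^2\in N(R)$. For the forward implication, if $x$ has strongly Drazin inverse $y$ then $e:=xy$ satisfies $e^2=xyxy=x(yxy)=xy=e$ and $ex=xyx=x^2y=xe$, while $x-e=x-xy\in N(R)$; writing $w=x-e$ we get $x=e+w$ with $e$ idempotent and $w$ a commuting nilpotent, so $x-x^2=x(1-x)=w(1-2e-w)\in N(R)$. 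The converse is the one genuinely non-formal ingredient: from $x-x^2\in N(R)$ one lifts an idempotent $e\in comm^2(x)$ (a polynomial in $x$) with $x-e\in N(R)$, which is exactly the idempotent-plus-commuting-nilpotent decomposition of \cite[Lemma 2.2]{W}; I would import this verbatim rather than reprove the lift.

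With this lemma applied to $x=a^2$, we get that $a^2$ has strongly Drazin inverse if and only if $a^2-a^4\in N(R)$. Combined with Theorem 2.4 this shows $a$ has Hirano inverse if and only if $a^2-a^4\in N(R)$, and the theorem is reduced to the purely nilpotent equivalence $a^2-a^4\in N(R)\Longleftrightarrow a-a^3\in N(R)$.

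This last equivalence is elementary, since $a$, $1-a^2$ and $a^2-a^4$ are mutually commuting and a product of commuting elements one of which is nilpotent is again nilpotent. If $a-a^3\in N(R)$, then $a^2-a^4=a(a-a^3)\in N(R)$. Conversely, if $a^2-a^4\in N(R)$, then $(a-a^3)^2=a^2(1-a^2)^2=(a^2-a^4)(1-a^2)\in N(R)$, so $a-a^3\in N(R)$. Assembling the three steps proves both directions. The only real obstacle is the converse half of the strongly Drazin characterization (the idempotent lift), and this is already available from the strong Drazin theory of \cite{W}; everything else is commuting-nilpotent bookkeeping.
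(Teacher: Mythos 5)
Your argument is correct, and it rests on the same two pivots as the paper's proof --- the reduction to $a^2-a^4\in N(R)$ and the lifting of an idempotent modulo a nilpotent --- but it packages them differently. The paper proves the forward direction by direct computation with the Hirano axioms: since $ab$ is an idempotent commuting with $w:=a^2-ab$, one gets $a^2-a^4=(ab+w)-(ab+w)^2\in N(R)$, and then the same squaring trick you use, $(a-a^3)^2=a(a-a^3)(1-a^2)$, passes from $a^2-a^4$ to $a-a^3$. For the converse the paper lifts an idempotent $e\in{\Bbb Z}[a^2]$ with $w:=a^2-e\in N(R)$ via \cite[Lemma 2.1]{CS} and then exhibits the inverse explicitly as $b=a(1+w)^{-1}e$, checking the three axioms by hand. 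You instead quote Theorem 2.4 to trade ``$a$ has Hirano inverse'' for ``$a^2$ has strongly Drazin inverse'' and then invoke the characterization of strong Drazin invertibility by $x-x^2\in N(R)$; this is more modular and avoids re-deriving the content of Theorem 2.4 inside the proof (the paper's $c=(1+w)^{-1}e$ is precisely the strongly Drazin inverse of $a^2$, and its $b=ac$ is the $b=ax$ of Theorem 2.4). The one point to tighten is the citation in the converse half of your lemma: what you actually need there is the idempotent lift from $x-x^2\in N(R)$ to $x=e+w$ with $e^2=e\in{\Bbb Z}[x]$ and $w\in N(R)$, which is \cite[Lemma 2.1]{CS} (the lifting statement the paper itself uses), whereas \cite[Lemma 2.2]{W} only records the equivalence of strong Drazin invertibility with the existence of such a decomposition; with both references in hand your lemma, and hence the whole proof, goes through.
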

\begin{proof} $\Longrightarrow$ Since $a$ has Hirano inverse $b$, we have $$w:=a^2-ab\in N(R), ab=ba~\mbox{and}~b=bab.$$ Hence, $a^2=ab+w$ and $(ab)w=w(ab)$. Thus, $a^2-a^4=(ab+w)-(ab+w)^2=(w-2ab-w)w\in N(R)$. Hence, $a(a-a^3)\in N(R)$, and so $(a-a^3)^2=a(a-a^3)(1-a^2)\in N(R)$. Accordingly, $a-a^3\in N(R)$.

$\Longleftarrow$ Suppose that $a-a^3\in N(R)$. Then $a^2-a^4=a(a-a^3)\in N(R)$. In view of~\cite[Lemma 2.1]{CS}, there exists an idempotent $e\in {\Bbb Z}[a^2]$ such that $w:=a^2-e\in N(R)$ and $ae=ea$.
Set $c=(1+w)^{-1}e$. Then $ac=ca$ and $$\begin{array}{lll}
a^2c&=&a^2(1+w)^{-1}e\\
&=&(1+w)^{-1}e(a^2+1-e)\\
&=&(1+w)^{-1}e(1+w)\\
&=&e.
\end{array}$$ Hence, $a^2-a^2c=a^2-e=w\in N(R)$. Moreover,
$ca^2c=ce=c$. Set $b=ac$. Then $a^2-ab\in N(R), ab=ba$ and $bab=(ac)a(ac)=(ca^2c)a=ca=b$.
Therefore $a$ has Hirano inverse $b$, as required.\end{proof}

\begin{cor} Let $R$ be a ring and $a\in R$. If $a$ has Hirano inverse, then $a^H$ has Hirano inverse, and that $(a^H)^H=a^2a^H$.\end{cor}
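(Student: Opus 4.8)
The plan is to verify directly that $c := a^2a^H$ is the Hirano inverse of $a^H$, which simultaneously yields existence and the claimed formula $(a^H)^H = a^2a^H$. Write $b = a^H$, so that $a^2 - ab \in N(R)$, $ab = ba$ and $b = bab$. First I would record the basic algebra. Since $b = bab = b(ab)$ and $ab=ba$, the element $e := ab = ba$ is idempotent (indeed $e^2 = abab = a(bab) = ab = e$) and satisfies $b = be = eb$; moreover $a$, $b$, $e$ commute pairwise. Setting $w := a^2 - ab = a^2 - e \in N(R)$, note that $w$ also commutes with $b$ and $e$. The target is then to check the three defining conditions for $c$ to be the Hirano inverse of $b$, namely $bc = cb$, $c = cbc$, and $b^2 - bc \in N(R)$.

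The first two conditions are routine commuting computations. Since $a$ and $b$ commute, $bc = ba^2b = a^2b^2 = (ab)^2 = e$ and likewise $cb = a^2b\cdot b = a^2b^2 = e$, so $bc = cb = e$. Then $cbc = c(bc) = (a^2b)e = a^3b^2 = a(a^2b^2) = ae = a(ab) = a^2b = c$, giving $c = cbc$.

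The main obstacle is the nilpotency condition $b^2 - bc = b^2 - e \in N(R)$. Here I would exploit the identity $a^2b^2 = e$ together with $a^2 = e + w$ and $b^2 = eb^2$ (from $b = eb$) to obtain $b^2 + wb^2 = (e+w)b^2 = a^2b^2 = e$, that is, $b^2(1+w) = e$. Since $w$ is nilpotent, $1+w$ is invertible, so $b^2 = e(1+w)^{-1}$ and hence $b^2 - e = -e\,w\,(1+w)^{-1}$. As $w$ is nilpotent and commutes with both $e$ and $(1+w)^{-1}$, this product is nilpotent, so $b^2 - e \in N(R)$. This completes the verification that $c = a^2a^H$ satisfies all three conditions, hence is the Hirano inverse of $a^H$, whence $(a^H)^H = a^2a^H$. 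The one subtlety to watch is ensuring that $w$, $e$, and $(1+w)^{-1}$ genuinely commute so that nilpotency of the product follows from that of $w$; this is guaranteed because $e \in \mathbb{Z}[a^2]$-type expressions and $b$ all lie in the commutative subring generated by $a$ and $b$.
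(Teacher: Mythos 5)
Your proposal is correct, but it takes a genuinely different route from the paper. The paper writes $b=a^H$, shows $b-b^3\in N(R)$ by an algebraic computation, invokes Theorem 3.1 to conclude that $b$ has a Hirano inverse, and then obtains the formula $(a^H)^H=a^2a^H$ by passing through Corollary 2.2 and the known Drazin-inverse identity $(a^D)^D=a^2a^D$ from Koliha. You instead verify directly that the explicit candidate $c=a^2a^H$ satisfies the three defining conditions, which yields existence and the formula in one stroke. Your key computation is sound: with $e=ab=ba$ idempotent, $b=be$, and $w=a^2-e\in N(R)$, the identity $b^2(1+w)=(e+w)b^2=a^2b^2=e$ gives $b^2-e=-ew(1+w)^{-1}$, which is nilpotent since $w$ is nilpotent and commutes with $e$ and $(1+w)^{-1}$ (all these elements lie in the commutative subring generated by $a$ and $b$, and $(1+w)^{-1}$ is a polynomial in $w$). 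The trade-off: your argument is more self-contained (it needs neither Theorem 3.1 nor the external reference to Koliha's theorem), while the paper's route is shorter given the machinery already established; moreover, the paper's displayed computation of $b-b^3$ leaves $(b-b^3)w$ on the right-hand side and so implicitly requires one more step (solving $(b-b^3)(1-w)=b^2(a-a^3)$) that your approach avoids entirely.
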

\begin{proof} Write $b=a^H$. Then $w:=a^2-ab\in N(R), ab=ba$ and $b=bab$. In view of Theorem 3.1, $a-a^3\in N(R)$.
One easily checks that $$\begin{array}{lll}
b-b^3&=&b^2a-b^2(bab)\\
&=&b^2a-b^3(a^2-w)=b^2a-b(b^2a)a-b^3w\\
&=&b^2a-b^2(ba)a-b^3w\\
&=&b^2a-b(b^2a)a-b^3w\\
&=&b^2a-b^2(a^2-w)a-b^3w=b^2(a-a^3)+(b-b^3)w\\
&\in & N(R).
\end{array}$$ By using Theorem 3.1 again, $b$ has Hirano inverse. In light of Corollary 2.2, $b=a^D$ and $b^H=b^D$.
Therefore $(a^H)^H=(a^D)^D=a^2a^D=a^2a^H$, in terms of~\cite[Theorem 5.4]{J}. This completes the proof.\end{proof}

We are now ready to prove the following.

\begin{thm} Let $R$ be a ring with $\frac{1}{2}\in R$, and let $a\in R$. Then the following are equivalent:\end{thm}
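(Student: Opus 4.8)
The plan is to treat the three conditions as $(1)$ ``$a$ has Hirano inverse'', $(2)$ ``there is a tripotent $p\in comm^2(a)$ with $a-p\in N(R)$'', and $(3)$ ``there are idempotents $e,f\in comm^2(a)$ with $a+e-f\in N(R)$'', and to prove the cycle $(1)\Rightarrow(2)\Rightarrow(3)\Rightarrow(1)$. Throughout I would invoke Theorem 3.1 to replace $(1)$ by the condition $a-a^3\in N(R)$; this is the only place the Hirano inverse enters, so the problem becomes one of approximating $a$ by a tripotent modulo nilpotents. I would first record the bookkeeping facts that $comm^2(a)\subseteq comm(a)$ (take $y=a$ in the definition), that $\mathbb{Z}[a]\subseteq comm^2(a)$, and that $comm^2(a)$ is a subring closed under taking inverses of its elements (in $R$ or in a corner); since $\frac12\in R$ is central it too lies in $comm^2(a)$. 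These guarantee that every element built below lands in $comm^2(a)$.

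The heart of the matter is $(1)\Rightarrow(2)$. Starting from $a-a^3\in N(R)$, I would note $a^2-a^4=a(a-a^3)\in N(R)$, so $b:=a^2$ satisfies $b-b^2\in N(R)$; by \cite[Lemma 2.1]{CS} there is an idempotent $e\in\mathbb{Z}[a^2]\subseteq comm^2(a)$ with $w:=a^2-e\in N(R)$ and $ea=ae$, $ew=we$. A short computation gives $a(1-e)\in N(R)$ (indeed $(a(1-e))^2=w(1-e)$ is nilpotent), so $a\equiv ae\pmod{N(R)}$ and it suffices to repair $ae$ into an exact tripotent. Passing to the corner ring $eRe$ with identity $e$, I would compute $(ae)^2=a^2e=e+we$, which is $e$ plus a nilpotent, hence a unipotent unit of $eRe$. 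Using $\frac12$ I would extract its principal square root $u=(e+we)^{1/2}\in eRe$, a terminating binomial series in $we$; thus $u$ is a polynomial in $we\in\mathbb{Z}[a^2]$, so $u\in comm^2(a)$, $u\equiv e\pmod{N(R)}$, and $u^2=(ae)^2$. Finally I would set $p:=ae\,u^{-1}$ (inverse taken in $eRe$); since $u$ commutes with $ae$ one gets $p^2=(ae)^2u^{-2}=e$, and as $p$ is a product of elements of $eRe$ the tripotent relation follows, $p^3=p\,p^2=pe=p$. Because $u^{-1}\equiv e$ we have $p\equiv ae\equiv a\pmod{N(R)}$, i.e. $a-p\in N(R)$, and $p=a\,e\,u^{-1}\in comm^2(a)$, which is $(2)$.

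The remaining implications are formal. For $(2)\Rightarrow(3)$ I would exploit that over a ring with $\frac12$ a tripotent is a difference of two idempotents: set $f=\tfrac{p^2+p}{2}$ and $e=\tfrac{p^2-p}{2}$. Using $p^4=p^2$ one checks $e^2=e$, $f^2=f$, and $f-e=p$, both lying in $comm^2(a)$, so $a+e-f=a-p\in N(R)$. For $(3)\Rightarrow(1)$ I would note that $e,f\in comm^2(a)\subseteq comm(a)$ commute with each other, so $p:=f-e$ satisfies $p^3=p$ (a direct expansion using $ef=fe$, $e^2=e$, $f^2=f$), and $a-p=a+e-f\in N(R)$. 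Since $a$ and $p$ commute, the identity $a-a^3=(a-p)(1-a^2-ap-p^2)$ exhibits $a-a^3$ as the product of a nilpotent with a commuting factor, whence $a-a^3\in N(R)$, and Theorem 3.1 returns the Hirano inverse.

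I expect the only genuine obstacle to be the exactness in $(1)\Rightarrow(2)$: the naive candidate $ae$ satisfies only $ae-(ae)^3\in N(R)$, not $(ae)^3=ae$, so a correction is unavoidable. The square-root (polar-decomposition) step inside $eRe$ is precisely what upgrades ``tripotent modulo nilpotents'' to an honest tripotent, and it is here that $\frac12\in R$ is essential; the remaining steps are careful but routine algebra.
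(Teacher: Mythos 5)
Your proposal is correct, but the route through the hard implication is genuinely different from the paper's. You prove the cycle $(1)\Rightarrow(2)\Rightarrow(3)\Rightarrow(1)$ and put all the work into $(1)\Rightarrow(2)$: apply \cite[Lemma 2.1]{CS} once to $a^2$ to get an idempotent $e\in{\Bbb Z}[a^2]$ with $a^2-e$ nilpotent, discard the nilpotent piece $a(1-e)$, and then repair $ae$ into an honest tripotent by dividing by the principal square root of the unipotent $(ae)^2=e+we$ in the corner ring $eRe$. The paper instead proves $(1)\Rightarrow(3)\Rightarrow(2)\Rightarrow(1)$, and its hard step is the decomposition $a=\frac{a^3+a}{2}-\frac{a^3-a}{2}$: each summand is idempotent modulo $N(R)$ because $b^2-b=\frac14(a+2)(a^3-a)$ and $c^2-c=\frac14(a-2)(a^3-a)$, so \cite[Lemma 2.1]{CS} (applied twice) lifts them to commuting idempotents, and the tripotent in $(2)$ is then obtained for free as their difference. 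Both arguments lean on Theorem 3.1 and on \cite[Lemma 2.1]{CS}, and both use $\frac12$, but in different places: the paper only needs it in the coefficients of $\frac{a^3\pm a}{2}$, whereas you need that every binomial coefficient $\binom{1/2}{k}$ has denominator a power of $2$ so that the terminating series for $(e+we)^{1/2}$ makes sense in $R$ --- a true but unstated fact you should record. Your construction buys slightly more structure (a tripotent $p$ with $p^2=e\in{\Bbb Z}[a^2]$ and $p\in eRe$), at the cost of the corner-ring and square-root machinery; the paper's version is shorter and stays inside ${\Bbb Z}[\frac12][a]$ throughout. Your $(2)\Rightarrow(3)$ via $f=\frac{p^2+p}{2}$, $e=\frac{p^2-p}{2}$ and your $(3)\Rightarrow(1)$ via the factorization $a-a^3=(a-p)(1-a^2-ap-p^2)$ are both sound and essentially match the paper's formal steps (the paper's $(2)\Rightarrow(1)$ expands $(p+w)^3$ directly instead, to the same effect).
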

\begin{enumerate}
\item [(1)] {\it $a\in R$ has Hirano inverse.}
\vspace{-.5mm}
\item [(2)] {\it There exists $p^3=p\in comm^2(a)$ such that $a-p\in N(R)$.}
\vspace{-.5mm}
\item [(3)] {\it There exist two idempotents $e,f\in comm^2(a)$ such that $a+e-f\in N(R)$.}
\end{enumerate}
\begin{proof} $(1)\Rightarrow (3)$ In view of Theorem 3.1, $a-a^3\in N(R)$. Set $b=\frac{a^3+a}{2}$ and $c=\frac{a^3-a}{2}$. Then $$\begin{array}{c}
b^2-b=\frac{1}{4}(a^4+2a^3-a^2-2a)=\frac{1}{4}(a+2)(a^3-a),\\
c^2-c=\frac{1}{4}(a^4-2a^3-a^2+2a)=\frac{1}{4}(a-2)(a^3-a).
\end{array}$$ Thus, $b-b^2, c-c^2\in N(R)$. In view of~\cite[Lemma 2.1]{CS}, there exist two idempotents $e\in {\Bbb Z}[b]$ and $f\in {\Bbb Z}[c]$ such that $b-e,c-f\in N(R)$.
One easily checks that $a=b-c$ and $b,c\in {\Bbb Z}[a]$. Therefore $a-f+e=(b-f)-(c-e)\in N(R)$. We see that $e,f\in {\Bbb Z}[a]\subseteq comm^2(a)$, as desired.

$(3)\Rightarrow (2)$ Set $p=f-e$. Then $a-p\in N(R)$ and $p^3=(e-f)^3=e-f=p$, as $ef=fe$.

$(2)\Rightarrow (1)$ By hypothesis, there exists $p^3=p\in comm^2(a)$ such that $w:=a-p\in N(R)$. Thus, $a-a^3=(p+w)-(p+w)^3=w(1-3pw-3p^2-w^2)\in N(R)$. THis completes the proof, by Theorem 3.1.\end{proof}

\begin{cor} Let $R$ be a ring with $\frac{1}{2}\in R$. Then $a\in R$ has Hirano inverse if and only if there exist commuting $b,c\in R$ such that $a=b-c$, where $b,c$ have strongly Drazin inverses.\end{cor}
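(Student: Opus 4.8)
The plan is to reduce both implications to the criterion of Theorem 3.1, namely that $a$ has Hirano inverse if and only if $a-a^3\in N(R)$, combined with the fact recorded in~\cite[Lemma 2.2]{W} that an element $x$ has strongly Drazin inverse if and only if $x-x^2\in N(R)$; moreover, by~\cite[Lemma 2.1]{CS}, in that case one may write $x=g+t$ with $g^2=g\in {\Bbb Z}[x]$, $t\in N(R)$ and $gt=tg$. Keeping the idempotent as a polynomial in $x$ is the feature I will exploit.

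For the forward direction I would reuse the construction from the proof of $(1)\Rightarrow(3)$ in Theorem 3.3. Assuming $a$ has Hirano inverse, Theorem 3.1 gives $a-a^3\in N(R)$; set $b=\frac{a^3+a}{2}$ and $c=\frac{a^3-a}{2}$, so that $a=b-c$ and $b,c$ are polynomials in $a$, whence $bc=cb$. As computed there, $b-b^2,c-c^2\in N(R)$, so by the criterion above both $b$ and $c$ have strongly Drazin inverses. This produces the required commuting pair.

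For the converse, suppose $a=b-c$ with $bc=cb$ and with $b,c$ having strongly Drazin inverses. Write $b=e+w$ and $c=f+v$, where $e^2=e\in {\Bbb Z}[b]$, $f^2=f\in {\Bbb Z}[c]$, $w,v\in N(R)$, $ew=we$ and $fv=vf$. Since $e$ is a polynomial in $b$, $f$ a polynomial in $c$, and $bc=cb$, the four elements $e,f,w,v$ commute pairwise. Put $p=e-f$ and $u=w-v$. Then $p^3=(e-f)^3=e-f=p$ because $ef=fe$, while $u\in N(R)$ as a difference of commuting nilpotents, and $a=p+u$ with $pu=up$. A direct expansion using $p^3=p$ then gives
$$a-a^3=(p+u)-(p+u)^3=u\big(1-3p^2-3pu-u^2\big)\in N(R),$$
so Theorem 3.1 yields that $a$ has Hirano inverse.

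The only delicate point is the commutativity bookkeeping in the converse: one must take the idempotents of the strongly Drazin decompositions inside ${\Bbb Z}[b]$ and ${\Bbb Z}[c]$, which is exactly what~\cite[Lemma 2.1]{CS} supplies, so that from $bc=cb$ alone the summands $e,f,w,v$ all commute and $p=e-f$ is genuinely a tripotent. Without that polynomial form one could not conclude $ef=fe$, and both the identity $p^3=p$ and the final computation of $a-a^3$ would break down.
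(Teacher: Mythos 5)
Your converse is correct and is essentially the paper's own argument: derive $b-b^2,\,c-c^2\in N(R)$ from the strongly Drazin inverse equations, pull out idempotents $e\in {\Bbb Z}[b]$ and $f\in {\Bbb Z}[c]$ via \cite[Lemma 2.1]{CS} so that $e,f,w,v$ all commute, note $(e-f)^3=e-f$, and expand $a-a^3$. Your insistence on taking the idempotents inside ${\Bbb Z}[b]$ and ${\Bbb Z}[c]$ is exactly the point that makes the commutativity bookkeeping legitimate.

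The forward direction is where you diverge from the paper, and where there is a concrete error. The paper routes through Theorem 3.3(3): it takes idempotents $e,f\in comm^2(a)$ with $w:=a+e-f\in N(R)$ and sets $b=f$, $c=e+w$, verifying directly that $e+w$ has strongly Drazin inverse. Your shortcut --- exhibiting $b,c$ as explicit polynomials in $a$ with $b-b^2,\,c-c^2\in N(R)$ and then invoking the strongly nil-clean characterization of strongly Drazin invertibility --- is a perfectly good alternative, but the elements you name do not work. With $b=\frac{a^3+a}{2}$ one has $b-a=\frac{a^3-a}{2}\in N(R)$, hence $b-b^2\equiv a-a^2$ modulo commuting nilpotents, and $a-a^2$ need not be nilpotent: take $a=-1$ in ${\Bbb Z}_5$, where $a-a^3=0$ but $b=-1$ and $b-b^2=-2$ is a unit. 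The elements that actually satisfy the factorizations $b^2-b=\frac{1}{4}(a+2)(a^3-a)$ and $c^2-c=\frac{1}{4}(a-2)(a^3-a)$ are $b=\frac{a^2+a}{2}$ and $c=\frac{a^2-a}{2}$; these still give $a=b-c$ and lie in the commutative subring generated by $a$ and $\frac{1}{2}$. (The formulas $\frac{a^3\pm a}{2}$ are copied from a typo in the printed proof of Theorem 3.3, whose displayed computation is in fact the expansion for $\frac{a^2\pm a}{2}$.) With that one correction your forward direction is complete, and is arguably cleaner than the paper's since it bypasses condition (3) of Theorem 3.3 entirely.
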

\begin{proof} $\Longrightarrow$ In light of Theorem 3.3, there exist two idempotents $e,f\in comm^2(a)$ such that $w:=a+e-f\in N(R)$. Hence,
$a=f-(e+w)$ with $w\in {\Bbb Z}[a]$. Clearly, $e\in R$ has strongly Drazin inverse.
Furthermore, we see that $(e+w)-(e+w)e\in N(R), e(e+w)=(e+w)e$ and $e=e(e+w)e$. Thus, $e+w\in R$ has strongly Drazin inverse.
Set $b=f$ and $c=e+w$. Then $a=b-c$ and $bc=ca$, as desired.

$\Longleftarrow$ Suppose that $a=b-c$, where $b,c$ have strongly Drazin inverses and $bc=cb$. Then we have $x\in R$ such that $w:=b-bx\in N(R), bx=xb$ and $x=xbx$.
Thus, $b-b^2=bx-b^2+w=b^2x^2-b^2+w=(w+b)^2-b^2+w=(w+2b+1)w\in N(R)$, s $w(w+2b+1)=(w+2b+1)w$. Likewise, $c-c^2\in N(R)$. In view of~\cite[Lemma 2.1]{CS}, there exist $e\in {\Bbb Z}[b]$ and $f\in {\Bbb Z}[c]$ such that $u=b-e,v=c-f\in N(R)$. Thus,
$a=(e+u)-(f+v)=e-f+(u-v)$. As $bc=cb$, we see that $e,f,u,v$ commute one another. It is easy to verify that $(e-f)^3=e-f$, and so $a-a^3\in N(R)$. In light of Theorem 3.1, $a\in R$ has Hirano inverse. This completes the proof.\end{proof}

As $2$ is invertible in any Banach algebra, we now derive

\begin{cor} Let $A$ be a Banach algebra and let $a\in A$. Then the following are equivalent:
\end{cor}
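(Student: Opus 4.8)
The plan is to obtain this as an immediate specialization of Theorem 3.3, the only real task being to check that the standing hypothesis $\frac{1}{2}\in R$ of that theorem holds automatically in the present setting. Since a Banach algebra $A$ is a unital algebra over the ground field (either $\mathbb{R}$ or $\mathbb{C}$, in any case of characteristic $\neq 2$), the scalar $2$ is a unit of that field, so $\frac{1}{2}\cdot 1_{A}$ is a two-sided inverse of $2=1_{A}+1_{A}$ in $A$. In other words $\frac{1}{2}\in A$, and $A$ meets the hypothesis of Theorem 3.3 with $R=A$. This is exactly the content signalled by the lead-in remark that $2$ is invertible in any Banach algebra.

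With $\frac{1}{2}\in A$ in hand, I would simply invoke Theorem 3.3: the equivalence of the statement that $a$ has a Hirano inverse, the statement that there is a tripotent $p\in comm^2(a)$ with $a-p\in N(A)$, and the statement that there are idempotents $e,f\in comm^2(a)$ with $a+e-f\in N(A)$ is precisely what that theorem asserts. No implication needs a fresh argument; the cyclic chain $(1)\Rightarrow(3)\Rightarrow(2)\Rightarrow(1)$ is inherited verbatim, resting in turn on Theorem 3.1 (so that $a$ has a Hirano inverse iff $a-a^3\in N(A)$) together with the nilpotent--idempotent splitting of \cite[Lemma 2.1]{CS}.

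Consequently the only step requiring verification is the reduction itself, namely the observation that $\frac{1}{2}$ lies in every real or complex Banach algebra; once this is noted there is no genuine obstacle. Should the corollary additionally record a spectral reformulation in the spirit of Example 2.7, I would derive it by combining Theorem 3.1 with the holomorphic functional calculus to force $\sigma(a)\subseteq\{-1,0,1\}$, but the core equivalences demand nothing beyond Theorem 3.3.
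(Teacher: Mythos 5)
Your reduction is exactly the paper's (implicit) argument: the lead-in remark ``As $2$ is invertible in any Banach algebra'' is the whole point, and once $\frac{1}{2}\in A$ is noted, the equivalence of (1), (2), (3) is Theorem 3.3 verbatim. One small incompleteness: the corollary as printed actually lists a \emph{fourth} equivalent condition --- that $a=b-c$ for commuting $b,c$ having strongly Drazin inverses --- which you could not have seen from the bare statement; it is handled by the same reduction, only invoking Corollary 3.4 (whose hypothesis is likewise $\frac{1}{2}\in R$) alongside Theorem 3.3. Your closing speculation about a spectral reformulation is not needed here.
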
\begin{enumerate}
\item [(1)] {\it $a$ has Hirano inverse.}
\vspace{-.5mm}
\item [(2)] {\it There exists $p^3=p\in comm^2(a)$ such that $a-p\in N(A)$.}
\vspace{-.5mm}
\item [(3)] {\it There exist two idempotents $e,f\in comm^2(a)$ such that $a+e-f\in N(A)$.}
\vspace{-.5mm}
\item [(4)] {\it There exist commuting $b,c\in R$ such that $a=b-c$, where $b,c$ have strongly Drazin inverses.}
\end{enumerate}

Recall that a ring $R$ is strongly 2-nil-clean if every element in $R$ is the sum of a tripotent and a nilpotent that commute (see~\cite{CS}).
We characterize strongly 2-nil-clean rings by using Hirano inverses.

\begin{prop} A ring $R$ is strongly 2-nil-clean if and only if every element in $R$ has Hirano inverse.\end{prop}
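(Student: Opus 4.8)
The plan is to funnel the whole statement through the single-element criterion of Theorem 3.1, that $a$ has Hirano inverse if and only if $a-a^3\in N(R)$. Quantifying this over all of $R$, the hypothesis ``every element of $R$ has Hirano inverse'' becomes equivalent to ``$a-a^3\in N(R)$ for every $a\in R$,'' and the remaining work is to line this condition up with the definition of a strongly $2$-nil-clean ring. So the proof splits cleanly into the two implications of that last equivalence.

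For the forward implication (strongly $2$-nil-clean $\Rightarrow$ every element has Hirano inverse) I would argue directly, with no use of $\frac12\in R$. Given $a\in R$, write $a=p+w$ with $p^3=p$, $w\in N(R)$ and $pw=wp$. Expanding $a^3=(p+w)^3$ and using $p^3=p$ gives $a-a^3=w\,(1-3p^2-3pw-w^2)$; since $w$ is nilpotent and commutes with the second factor, the product lies in $N(R)$. This is exactly the computation already used in the $(2)\Rightarrow(1)$ step of Theorem 3.3. Hence $a-a^3\in N(R)$, and Theorem 3.1 produces a Hirano inverse of $a$.

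For the reverse implication I would invoke the characterization of strongly $2$-nil-clean rings from~\cite{CS}, namely that a ring is strongly $2$-nil-clean precisely when $a-a^3\in N(R)$ holds for every $a\in R$. Since every element having a Hirano inverse gives exactly this condition (again by Theorem 3.1), $R$ is strongly $2$-nil-clean, completing the equivalence.

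The main obstacle sits entirely in this reverse direction: passing from the pointwise identity $a-a^3\in N(R)$ back to an honest decomposition $a=p+w$ with $p$ a \emph{genuine} tripotent rather than merely a tripotent modulo nilpotents. When $\frac12\in R$ this is immediate from Theorem 3.3, which already supplies $p^3=p\in comm^2(a)$ with $a-p\in N(R)$; in general it requires lifting a tripotent through the nil part, which is the delicate point handled in~\cite{CS}. If a self-contained argument were wanted, I would reconstruct the tripotent by applying the idempotent-lifting Lemma 2.1 of~\cite{CS} to $a^2$ to obtain an idempotent $e\in{\Bbb Z}[a^2]$ with $a^2-e\in N(R)$ and $ea=ae$, then split $a$ along $e$: on the $(1-e)$-corner $((1-e)a)^2=a^2(1-e)$ is nilpotent, so $(1-e)a\in N(R)$, while on the $e$-corner $ea$ satisfies $(ea)^2\equiv e\pmod{N(R)}$ and must be upgraded to a true tripotent. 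That upgrade is precisely the square-root/involution-lifting subtlety that fails to be routine without $\frac12$, which is why citing~\cite{CS} is the cleaner route.
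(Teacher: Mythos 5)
Your proposal is correct and follows essentially the same route as the paper: both directions reduce to the criterion $a-a^3\in N(R)$ of Theorem 3.1 and then appeal to the characterization of strongly 2-nil-clean rings in~\cite{CS}. The only (harmless) difference is that you verify the forward direction by a direct expansion of $(p+w)^3$ instead of citing~\cite[Theorem 2.3]{CS} for that implication as well.
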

\begin{proof} $\Longrightarrow $ Let $a\in R$. In view of~\cite[Theorem 2.3]{CS}, $a-a^3\in N(R)$. Thus, $a$ has Hirano inverse by Theorem 3.1.

$\Longleftarrow $ Let $a\in R$. In light of Theorem 3.1, $a-a^3\in N(R)$. Therefore we complete the proof, by~\cite[Theorem 2.3]{CS}.\end{proof}

\begin{cor} A Banach algebra $A$ is strongly 2-nil-clean if and only if every element in $A$ is the difference of two commuting elements
that have strongly Drazin inverses.\end{cor}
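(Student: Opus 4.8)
The plan is to deduce this corollary almost immediately by combining the two preceding results applied elementwise, since both sides of the stated equivalence are conditions on \emph{every} element of $A$. The key observation is that Proposition 3.6 already converts the global ring-theoretic hypothesis ``strongly $2$-nil-clean'' into the pointwise statement ``every element of $A$ has Hirano inverse,'' and Corollary 3.5 converts ``$a$ has Hirano inverse'' into the pointwise statement ``$a$ is the difference of two commuting elements with strongly Drazin inverses.'' So the whole proof is a chain of two biconditionals with no new computation required.

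First I would invoke Proposition 3.6 to record that $A$ is strongly $2$-nil-clean if and only if every $a\in A$ has Hirano inverse. Here it is essential that we are in a Banach algebra, because Proposition 3.6 and Theorem 3.1 are valid in an arbitrary ring, but the next step uses Corollary 3.5, which relies on $2$ being invertible; this invertibility is automatic in any Banach algebra, as noted just before Corollary 3.5. Next I would fix an arbitrary $a\in A$ and apply the equivalence $(1)\Leftrightarrow(4)$ of Corollary 3.5, which states precisely that $a$ has Hirano inverse if and only if there exist commuting $b,c\in A$ with $a=b-c$ and $b,c$ both possessing strongly Drazin inverses.

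Finally I would assemble the two equivalences: $A$ is strongly $2$-nil-clean $\Leftrightarrow$ every $a\in A$ has Hirano inverse $\Leftrightarrow$ every $a\in A$ is the difference of two commuting elements that have strongly Drazin inverses. Since the middle quantified statement is equivalent to each of the outer two, transitivity gives the desired result. I would close by remarking that both directions come for free because the elementwise equivalence of Corollary 3.5 holds for \emph{each} element simultaneously, so universally quantifying over $a\in A$ preserves the biconditional.

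There is no genuine obstacle here; the only point demanding a word of care is making explicit that the hypothesis $\frac{1}{2}\in R$ needed for Corollary 3.5 is supplied automatically by the Banach-algebra setting, so that Corollary 3.5 is applicable to every element of $A$. Everything else is a formal concatenation of Proposition 3.6 with the $(1)\Leftrightarrow(4)$ clause of Corollary 3.5.
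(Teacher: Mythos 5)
Your proposal is correct and follows exactly the paper's own argument: the paper likewise proves this corollary by combining Proposition 3.6 with Corollary 3.5 (whose equivalence $(1)\Leftrightarrow(4)$ applies since $2$ is invertible in any Banach algebra). Your added remark about why the Banach-algebra hypothesis makes Corollary 3.5 applicable is a useful explicit touch, but the route is identical.
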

\begin{proof} Combining Proposition 3.6 and Corollary 3.5, we obtain the result.\end{proof}

\section{Clines formula}

Let $a,b\in R$. Then $ab$ has Drazin inverse if and only if $ba$ has Drazin inverse. In this case,
$(ba)^D = b((ab)^D)^2a$. This was known as Clines formula for Drazin inverses.
It plays an important role in
revealing the relationship between the Drazin inverse of a sum of two elements and the
Drazin inverse of a block triangular matrix. In ~\cite{ZZ} extended this result and proved that if $aba=aca$ then $1+ac$ has Drazin inverse if and only if $ba$ has Drazin inverse.
The goal of this section is to generalize Cline¡¯s formula from Drazin inverses to Hirano inverses. We derive

\begin{thm} Let $R$ be a ring, and let $a,b,c\in R$. If $aba=aca$, Then $ac$ has Hirano inverse if and only if $ba$ has Hirano inverse. In this case, $(ba)^{H}=b((ac)^{H})^2a$.
\end{thm}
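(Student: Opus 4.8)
The strategy is to route everything through the nilpotency criterion of Theorem~3.1: an element $x\in R$ has Hirano inverse if and only if $x-x^3\in N(R)$. Hence the whole equivalence reduces to showing that, under the standing hypothesis $aba=aca$,
$$ac-(ac)^3\in N(R)\iff ba-(ba)^3\in N(R),$$
after which the equivalence of Hirano invertibility of $ac$ and $ba$ is immediate.

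The first step is to record two ``sandwich'' power identities. Feeding $aba=aca$ into an easy induction on $n$ yields
$$(ba)^{n+1}=b(ac)^n a\qquad\text{and}\qquad (ac)^{n+1}=a(ba)^n c\qquad(n\geq 1).$$
For the first, the base case is $(ba)^2=b(aba)=b(aca)=b(ac)a$, and the inductive step is $(ba)^{n+2}=b(ac)^n(aba)=b(ac)^n(aca)=b(ac)^{n+1}a$; the second identity follows from the mirror-image computation $(ac)^2=(aca)c=(aba)c=a(ba)c$, continued inductively (or by sandwiching the first identity between $a$ and $c$). These identities let me convert a polynomial in $ba$ containing only powers $\geq 2$ into a $b(\,\cdot\,)a$-sandwich of the corresponding polynomial in $ac$, and symmetrically an $a(\,\cdot\,)c$-sandwich in the other direction.

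For the forward implication, suppose $N:=ac-(ac)^3\in N(R)$ with $N^{s}=0$, where I may take $s\geq 1$. Since $ba$ commutes with $(ba)^2$, I expand $(ba-(ba)^3)^{j}=(ba)^{j}(1-(ba)^2)^{j}$; for $j\geq 2$ every power of $ba$ occurring is $\geq 2$, so the first identity applies termwise and gives
$$(ba-(ba)^3)^{j}=b\,(ac)^{j-1}(1-(ac)^2)^{j}\,a\qquad(j\geq 2).$$
Taking $j=s+1$ and regrouping $(ac)^{s}(1-(ac)^2)^{s}=\big(ac-(ac)^3\big)^{s}=N^{s}$ yields $(ba-(ba)^3)^{s+1}=b\,N^{s}(1-(ac)^2)\,a=0$, so $ba-(ba)^3\in N(R)$. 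The reverse implication is entirely symmetric, using the second power identity to obtain $(ac-(ac)^3)^{s+1}=a\,\big(ba-(ba)^3\big)^{s}(1-(ba)^2)\,c$.

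For the displayed formula, once both $ac$ and $ba$ are known to have Hirano inverses I would invoke Corollary~2.2, which identifies the Hirano inverse with the Drazin inverse. The generalized Cline formula for Drazin inverses under $aba=aca$ then gives $(ba)^{D}=b((ac)^{D})^{2}a$, whence $(ba)^{H}=(ba)^{D}=b((ac)^{D})^{2}a=b((ac)^{H})^{2}a$; alternatively one checks directly that $b((ac)^{H})^{2}a$ satisfies the three defining equations of the Hirano inverse of $ba$. I expect the main obstacle to be the bookkeeping behind the second power identity $(ac)^{n+1}=a(ba)^n c$, which is less transparent than the first and is exactly what makes the two directions symmetric; aligning the exponents so that a single factor $N$ (respectively $ba-(ba)^3$) can be split off is the delicate point of the argument.
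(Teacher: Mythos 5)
Your proposal is correct and follows essentially the same route as the paper: both reduce everything to the nilpotency criterion of Theorem~3.1, transfer nilpotency of $ac-(ac)^3$ to $ba-(ba)^3$ via a power identity of the form $(ba-(ba)^3)^{n+1}=b(ac-(ac)^3)^n(\,\cdot\,)$ (your $b\,N^{s}(1-(ac)^2)a$ coincides with the paper's $b(ac-(ac)^3)^{n}(a-ababa)$, since $ababa=acaca$ under $aba=aca$), and obtain the displayed formula from Corollary~2.2 together with the known generalized Cline formula for Drazin inverses. Your derivation of the key identity through the sandwich relation $(ba)^{m}=b(ac)^{m-1}a$ is just a tidier bookkeeping of the paper's direct induction.
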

\begin{proof} $\Longrightarrow$ By induction on $n$ we shall prove 
$$(ba-(ba)^3)^{n+1}=b(ac-(ac)^3)^n(a-ababa).$$
 For $n=1$, $$\begin{array}{l}
((ba-(ba)^3))^2\\
=(ba-bababa)(ba-bababa)\\
=b(a-ababa)ba(1-baba)\\
=b(aba-abababa)(1-baba)\\
=b(aca-acacaca)(1-baba)\\
=b(ac-acacac)(a-ababa)\\
=b(ac-(ac)^3)(a-ababa).
\end{array}$$
Suppose that the assertion is true for any $n\leq k (k\geq 2)$. Then,

$$\begin{array}{l}
(ba-(ba)^3)^{k+1}\\
=((ba-(ba)^3))^{k} (ba-(ba)^3)\\
=b(ac-(ac)^3)^{k-1}(a-ababa)(ba-bababa)\\
=b(ac-(ac)^3)^{k-1}(aba-abababa)(1-baba)\\
=b(ac-(ac)^3)^{k-1}(aca-acacaca)(1-baba)\\
=b(ac-(ac)^3)^{k}(a-ababa).
\end{array}$$
  As $ac$ has Hirano inverse, then by Theorem 3.1, $(ac-(ac)^3)^n=0$ for some $n\in {\Bbb N}$, which implies that $ba-(ba)^3\in N(R))$ by the preceding discussion. In light of Theorem 3.1, $ba$ has Hirano inverse.
  
Now let $e=(ac)^H$, by Corollary 2.2, $e$ is Drazin inverse of $ac$. As we proved, $ba$ has Hirano inverse, and its Hirano inverse is its drazin inverse, then by \cite[Theorem 2.7]{ZZ}, $(ba)^D =(b(ac)^2)^Da$ and so $(ba)^H=b(b(ac)^2)^H)a$.

$\Longleftarrow$ This is symmetric.\end{proof}

\begin{cor} Let $R$ be a ring and $a,b\in R$. If $ab$ has Hirano inverse, then so has $ba$, and $(ba)^{H} = b((ab)^{H})^2a.$\end{cor}
\begin{proof} It follows directly from Theorem 4.1, as $aba=aba$.\end{proof}

\begin{thm} Let $R$ be a ring, and let $a,b\in R$. If $(ab)^k$ has Hirano inverse, then so is $(ba)^k$.
\end{thm}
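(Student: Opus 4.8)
The plan is to reduce the statement to Corollary 4.2, which is exactly the case $k=1$. The key observation is that, for $k\geq 1$, both $(ab)^k$ and $(ba)^k$ can be written simultaneously as a product $xy$ and its reversed product $yx$. Concretely, I would set $x=a$ and $y=(ba)^{k-1}b$; then the shuffling identity $a(ba)^m=(ab)^m a$ yields
\[
xy=a(ba)^{k-1}b=(ab)^{k-1}ab=(ab)^k,\qquad yx=(ba)^{k-1}ba=(ba)^k.
\]

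With this substitution in hand the theorem is immediate: by hypothesis $xy=(ab)^k$ has Hirano inverse, so Corollary 4.2 applied to the pair $x,y$ gives that $yx=(ba)^k$ has Hirano inverse. If an explicit expression is wanted, the same corollary supplies $\big((ba)^k\big)^H=(ba)^{k-1}b\,\big(((ab)^k)^H\big)^2\,a$.

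For a self-contained argument that does not invoke the displayed formula of Corollary 4.2, I would instead route through Theorem 3.1. By that theorem the hypothesis says $w:=(ab)^k-(ab)^{3k}\in N(R)$, say $w^n=0$; writing $p=(ab)^k$ and $q=(ba)^k=yx$ we have $w=p-p^3=p(1-p^2)$, and the identity $p^m=xq^{m-1}y$ gives $q-q^3=y(1-p^2)x$. Expanding a power and using that $p$ commutes with $1-p^2$ produces
\[
(q-q^3)^{n+1}=y\,p^{n}(1-p^2)^{n+1}x=y\,w^{n}(1-p^2)x=0,
\]
so $(ba)^k-(ba)^{3k}\in N(R)$, and Theorem 3.1 finishes the proof.

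There is no serious obstacle here; the proof is genuinely short. The only things to get right are the bookkeeping in the shuffling identities that identify $(ab)^k$ and $(ba)^k$ as $xy$ and $yx$, and, in the second route, the commutation of $p$ with $1-p^2$ that lets the whole nilpotent factor $w^{n}$ be extracted from $(q-q^3)^{n+1}$. The one degenerate case worth flagging is $k=0$, where $(ab)^0=(ba)^0=1$ and the substitution $y=(ba)^{k-1}b$ is unavailable; there the claim is trivial since $1-1^3=0\in N(R)$, so it suffices to treat $k\geq 1$, as the statement implicitly intends.
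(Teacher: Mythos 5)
Your main argument is exactly the paper's proof: factor $(ab)^k=a\cdot\bigl(b(ab)^{k-1}\bigr)$ (your $y=(ba)^{k-1}b$ is the same element written in reversed-shuffle form) and apply Corollary 4.2 to conclude that the reversed product $(ba)^k$ has a Hirano inverse. The additional self-contained route through Theorem 3.1, extracting $w^n$ from $(q-q^3)^{n+1}$, is also correct but is extra machinery the paper does not need.
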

\begin{proof} $\Longrightarrow$ Let $k=1$, then by Corollary 4.2, $ab$ has Hirano inverse if and only if $ba$ has Hirano inverse. Now let $k\geq 2$, as $(ab)^k=a(b(ab)^{k-1})$, then by Corollary 4.2, $(b(ab)^{k-1})a$ has Hirano inverse, so $(ba)^k$ has Hirano inverse.
\end{proof}

\begin{thm} Let $R$ be a ring, and let $a,b\in R$. If $a,b$ have Hirano inverses and $ab=ba$. Then $ab$ has Hirano inverse and $$(ab)^H=a^Hb^H.$$
\end{thm}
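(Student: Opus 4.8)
The plan is to separate the statement into its two assertions: first that $ab$ admits a Hirano inverse, and then that this inverse equals $a^Hb^H$. For existence I would lean entirely on the characterization of Theorem 3.1, which reduces the problem to showing $ab-(ab)^3\in N(R)$. Since $a$ and $b$ commute, $(ab)^3=a^3b^3$, and I can split
$$ab-a^3b^3=(a-a^3)b+a^3(b-b^3).$$
Because $a$ has a Hirano inverse, Theorem 3.1 gives $a-a^3\in N(R)$, and likewise $b-b^3\in N(R)$. Each summand above is a product of one of these nilpotents with elements that commute with it, hence is itself nilpotent; the two summands lie in the commutative subring generated by $a$ and $b$, so they commute and their sum is nilpotent. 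Thus $ab-(ab)^3\in N(R)$, and Theorem 3.1 yields the existence of $(ab)^H$.

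For the formula I would put $p=a^H$, $q=b^H$ and verify directly that $x=pq$ satisfies the three defining conditions of a Hirano inverse of $ab$, namely $(ab)^2-(ab)x\in N(R)$, $(ab)x=x(ab)$ and $x=x(ab)x$; uniqueness (Corollary 2.2) then forces $(ab)^H=pq$. The computations rely on the recorded identities $a^2-ap\in N(R)$, $pap=p$ and $b^2-bq\in N(R)$, $qbq=q$, together with pairwise commutativity of $a,b,p,q$. Granting this commutativity, the condition $(ab)x=x(ab)$ is immediate, and $x(ab)x=pqabpq=(pap)(qbq)=pq=x$ gives the third condition. For the first I would write
$$(ab)^2-(ab)(pq)=(a^2-ap)b^2+ap(b^2-bq),$$
again a sum of two commuting nilpotents, hence nilpotent.

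The main obstacle is justifying the pairwise commutativity of $a,b,p,q$, on which conditions $(2)$ and $(3)$ as well as the splitting in condition $(1)$ all depend. This is not automatic from $ab=ba$ alone, but follows from the fact that the Hirano inverse coincides with the Drazin inverse (Corollary 2.2) and that $a^D$ commutes with every element commuting with $a$; in particular $b$ commutes with $p=a^D$, $a$ commutes with $q=b^D$, and consequently $p$ commutes with $q$. Once this standard double-commutant property is invoked, the three verifications are routine and the theorem follows.
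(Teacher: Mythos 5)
Your proof is correct, and the existence half is essentially the paper's argument: both reduce the problem via Theorem 3.1 to showing $ab-(ab)^3\in N(R)$ and exhibit this element as a sum of commuting nilpotents, the only difference being the bookkeeping (you use the two-term splitting $ab-a^3b^3=(a-a^3)b+a^3(b-b^3)$, the paper the three-term one $(a-a^3)(b-b^3)+a^3(b-b^3)+b^3(a-a^3)$; both are fine). For the formula $(ab)^H=a^Hb^H$ the routes genuinely diverge. The paper does no computation at all: having established existence, it notes that $ab$ is Drazin invertible, quotes the known identity $(ab)^D=b^Da^D$ for commuting Drazin invertible elements from \cite[Lemma 2]{ZC}, and concludes from Corollary 2.2 that $(ab)^H=(ab)^D=b^Ha^H$. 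You instead verify the three defining conditions for $x=a^Hb^H$ directly and invoke uniqueness. Both arguments rest on the same nontrivial external input, namely that the Drazin inverse of $a$ lies in $comm^2(a)$ --- this is exactly what makes $a^H=a^D$ commute with $b$ and with $b^H$ in your verification, and it is also what underlies the cited lemma --- and you are right to flag it explicitly rather than treat the pairwise commutativity of $a,b,a^H,b^H$ as automatic from $ab=ba$. Your version is more self-contained and makes visible where the double-commutant property is actually used; the paper's is shorter because it outsources the multiplicative Drazin identity to the literature. Either way the conclusion is sound.
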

\begin{proof} Since $a,b$ have Hirano inverses, by Theorem 3.1, we have $a-a^3,b-b^3\in N(R)$. Note that
$$\begin{array}{lll}
ab-(ab)^3&=&(a-a^3)(b-b^3)+ab^3+a^3b-a^3b^3-a^3b^3\\
&=&(a-a^3)(b-b^3)+a^3(b-b^3)+b^3(a-a^3)\\
&\in& N(R),
\end{array}$$
as $ab=ba$. By using Theorem 3.1 again, $ab$ has Hirano inverse. By Theorem 2.1, $ab$ has Drazin inverse and by \cite[Lemma 2]{ZC}, $(ab)^D=b^Da^D$. According to Corollary 2.2, $(ab)^H=(ab)^D=b^Da^D=b^Ha^H$, as asserted.\end{proof}

\begin{cor} Let $R$ be a ring. If $a\in R$ has Hirano inverse, then $a^n$ has Hirano inverse and $(a^n)^H=(a^H)^n$ for all $n\in {\Bbb N}$.
 \end{cor}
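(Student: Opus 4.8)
The plan is to prove the statement by induction on $n$, leaning entirely on the multiplicative result for commuting Hirano-invertible elements established in Theorem 4.4. The key observation is that any two powers of a single element commute, so $a$ and $a^n$ always form a pair to which Theorem 4.4 applies, provided both are already known to possess Hirano inverses.

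For the base case $n=1$ there is nothing to prove: $a^1=a$ has Hirano inverse by hypothesis, and trivially $(a^1)^H=a^H=(a^H)^1$. For the inductive step I would assume that $a^n$ has Hirano inverse with $(a^n)^H=(a^H)^n$. Writing $a^{n+1}=a\cdot a^n$, I note that $a$ has Hirano inverse by hypothesis, $a^n$ has Hirano inverse by the inductive hypothesis, and $a(a^n)=(a^n)a=a^{n+1}$, so the two factors commute. Theorem 4.4 then yields at once that $a^{n+1}$ has Hirano inverse and that $(a^{n+1})^H=a^H(a^n)^H$. Substituting the inductive hypothesis $(a^n)^H=(a^H)^n$ gives $(a^{n+1})^H=a^H(a^H)^n=(a^H)^{n+1}$, which closes the induction.

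I do not expect a genuine obstacle here: the entire content has already been absorbed into Theorem 4.4, and the only fact needed beyond it, namely that powers of $a$ commute, is immediate. The one point worth stating explicitly is that Theorem 4.4 requires \emph{both} factors to be Hirano-invertible, which is precisely why the argument must proceed inductively rather than by a single application: it is the inductive hypothesis that supplies the Hirano inverse of $a^n$. As an alternative, the mere existence of $(a^n)^H$ could be derived directly from Theorem 3.1 by checking that $a^n-a^{3n}=a^{n-1}(a-a^3)(1+a^2+\cdots+a^{2n-2})\in N(R)$, since $a-a^3\in N(R)$ commutes with the remaining factors; but this route does not immediately deliver the formula $(a^n)^H=(a^H)^n$, so I would prefer the inductive argument above.
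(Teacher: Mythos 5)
Your proposal is correct and matches the paper's argument, which simply invokes Theorem 4.4 together with induction; your write-up just makes explicit the commuting of $a$ with $a^n$ and the role of the inductive hypothesis in supplying $(a^n)^H$. No issues.
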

\begin{proof} By Theorem 4.4 and induction, we easily obtain the result.\end{proof}

We note that the converse of the preceding corollary is not true, as the following shows.

\begin{exam} Let ${\Bbb Z}_5={\Bbb Z}/5{\Bbb Z}$ be the ring of integers modulo $5$. Then $-2\in {\Bbb Z}_5$ has no Hirano inverse, as $(-2)-(-2)^3=1\in {\Bbb Z}_5$ is not nilpotent, but $(-2)^2=-1\in {\Bbb Z}$ has Hirano inverse, as $(-1)-(-1)^3\in {\Bbb Z}_5$ is nilpotent.
\end{exam}

\section{Additive properties}

Let $a,b\in R$. Jacobson Lemma asserted that $1+ab\in R$ has Drazin inverse if and only if $1+ba\in R$ has Drazin inverse.
More recently, Yan and Fang studied the link between basic operator properties of $1+ac$ and $1+ba$ when $aba=aca$ (see~\cite{YF}). Though we do not know 
 whether the Drazin invertibility of $1+ac$ and $1+ba$ coincide under $aba=aca$, we could generalize Jacobson Lemma for Hirano inverses. 

\begin{thm} Let $R$ be a ring, and let $a,b,c\in R$. If $aba=aca$, Then $1+ac$ has Hirano inverse if and only if $1+ba$ has Hirano inverse.
\end{thm}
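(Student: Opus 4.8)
The plan is to reduce everything to the nilpotency criterion of Theorem 3.1 and then transport nilpotency across the two sides by a Cline-type power identity. First I would record that for any $x\in R$ one has $(1+x)-(1+x)^3=-(2x+3x^2+x^3)=-P(x)$, where $P(t)=t(1+t)(2+t)=tQ(t)$ with $Q(t)=(1+t)(2+t)$. By Theorem 3.1, $1+x$ has Hirano inverse if and only if $P(x)\in N(R)$. Thus it suffices to prove $P(ac)\in N(R)$ if and only if $P(ba)\in N(R)$.

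Next I would extract from $aba=aca$ the elementary intertwining relation $a(ba)^n=(ac)^na$ for all $n\ge 0$, proved by induction from $a\cdot ba=aba=aca=ac\cdot a$. This immediately yields $a\,h(ba)=h(ac)\,a$ for every polynomial $h$, together with $(ba)^{n+1}=b(ac)^na$ and $(ac)^{n+1}=a(ba)^nc$. These three identities are the only consequences of $aba=aca$ the argument needs.

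With these in hand, the core of the proof is the pair of power identities
$$P(ba)^m=b\,P(ac)^{m-1}Q(ac)\,a,\qquad P(ac)^m=a\,P(ba)^{m-1}Q(ba)\,c\quad(m\ge 1).$$
For the first, I would write $P(ba)^m=(ba)^mQ(ba)^m$ (legitimate since $ba$ and $Q(ba)$ commute), replace $(ba)^m$ by $b(ac)^{m-1}a$, push the $a$ through via $a\,Q(ba)^m=Q(ac)^m a$, and recombine $(ac)^{m-1}Q(ac)^m=P(ac)^{m-1}Q(ac)$. The second identity comes by the mirror manipulation, starting from $P(ac)^m=Q(ac)^m(ac)^m$, substituting $(ac)^m=a(ba)^{m-1}c$, and moving $Q(ac)^m$ past $a$ by the same intertwining relation.

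The conclusion is then immediate: if $P(ac)^N=0$ the first identity gives $P(ba)^{N+1}=0$, and if $P(ba)^N=0$ the second gives $P(ac)^{N+1}=0$; either way Theorem 3.1 closes the equivalence. I expect the main obstacle to be the reverse direction, since the hypothesis $aba=aca$ is not symmetric under the interchange $ac\leftrightarrow ba$ (the naive relabelling only swaps $ab$ with $ca$), so one cannot simply invoke symmetry as in Theorem 4.1. The resolution is the separate relation $(ac)^{n+1}=a(ba)^nc$, which plays the role for $ac$ that $(ba)^{n+1}=b(ac)^na$ plays for $ba$; the remaining care is bookkeeping, namely keeping factors that are polynomials in the same element together so that the commuting and recombining steps are valid.
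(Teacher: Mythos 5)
Your proof is correct. At its core it follows the same strategy as the paper's: reduce via Theorem 3.1 to transporting nilpotency of $(1+x)-(1+x)^3=-x(1+x)(2+x)=-P(x)$ between $x=ac$ and $x=ba$, by means of a Cline-type power identity. Indeed your closed-form identity $P(ba)^m=b\,P(ac)^{m-1}Q(ac)\,a$ is, up to sign, exactly the identity
$((1+ba)-(1+ba)^3)^{n+1}=b((1+ac)-(1+ac)^3)^n(-2a-3aba-ababa)$
that the paper proves by induction (note that $-2a-3aba-ababa=-a\,Q(ba)=-Q(ac)\,a$ with $Q(t)=(1+t)(2+t)$). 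Two differences are worth recording. First, you obtain the identity in one stroke from the intertwining relation $a\,h(ba)=h(ac)\,a$, which is tidier and less error-prone than the paper's term-by-term induction. Second, and more substantively, you handle the converse with its own identity $P(ac)^m=a\,P(ba)^{m-1}Q(ba)\,c$, whereas the paper dismisses it with ``this is symmetric.'' You are right to be wary of that phrase: the hypothesis $aba=aca$ is symmetric in $b\leftrightarrow c$ but not under any relabelling that exchanges the roles of $ac$ and $ba$, so the converse is not a literal instance of the forward implication. (It can be recovered indirectly: apply the forward direction to the triple $(a,c,b)$ to get that $1+ab$ Hirano invertible implies $1+ca$ Hirano invertible, and sandwich this between two applications of the ordinary Jacobson-lemma case $c=b$; but your direct second identity is cleaner and is what the proof really calls for.) I see no gaps in your argument.
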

\begin{proof} $\Longrightarrow$   First by induction on $n$ we prove that
$$((1+ba)-(1+ba)^3)^{n+1}=b((1+ac)-(1+ac)^3)^n(-2a-3aba-ababa).$$
 For $n=1$,
 $${\footnotesize\begin{array}{l}
 ((1+ba)-(1+ba)^3)^2\\
 =((1+ba)-(1+ba)^3)((1+ba)-(1+ba)^3)\\
 =(-2ba-3baba-bababa)(1+ba-1-3ba-3baba-bababa)\\
 =(-2ba-3baba-bababa)ba(-2-3ba-baba)\\
 =(-2baba-3bababa-babababa)(-2-3ba-baba)\\
 =b(-2aba-3ababa-abababa)(-2-3ba-baba)\\
 =b(-2ac-3acac-acacac)(-2a-3aba-ababa)\\
 =b((1+ac)-(1+ac)^3)(-2a-3aba-ababa).
 \end{array}}$$
 Let $k\geq 2$ and the assertion works for any $n\leq k$. Then
 $${\footnotesize\begin{array}{l}
 (( 1+ba)-(1+ba)^3)^{k+1}\\
 =((1+ba)-( 1+ba)^3)^{k}((1+ba)-(1+ba)^3)\\
 = b((1+ac)-(1+ac)^3)^{k-1}(-2a-3ba-baba)((1+ba)-(1+ba)^3) \\
 = b((1+ac)-(1+ac)^3)^{k-1}(-2a-3ba-baba)(ba-3ba-3baba-(ba)^3)\\
  = b((1+ac)-(1+ac)^3)^{k-1}(-2aba-3baba-bababa)(-2-3ba-baba)\\
   = b((1+ac)-(1+ac)^3)^{k}(-2a-3aba-ababa).
   \end{array}}$$
  As $1+ac$ has Hirano inverse, then by Theorem 3.1, $(1+ac)-(1+ac)^3\in N(R)$. By the above equality, $(1+ba)-(1+ba)^3\in N(R)$. In light of Theorem 3.1, $(1+ba)$ has Hirano inverse. 

$\Longleftarrow$ This is symmetric.\end{proof}

\begin{cor} Let $R$ be a ring, and let $a,b\in R$. Then $1+ab$ has Hirano inverse if and only if $1+ba$ has Hirano inverse.
\end{cor}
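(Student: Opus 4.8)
The plan is to read this off directly from Theorem~5.1 by a trivial specialization of the three elements $a,b,c$. In Theorem~5.1 the hypothesis $aba=aca$ relates the pair $ac$ and $ba$; here I only have two elements $a,b$ and want to compare $1+ab$ with $1+ba$. So I would apply Theorem~5.1 with the choice $c=b$. Under this choice the standing hypothesis $aba=aca$ becomes $aba=aba$, which holds automatically and requires nothing to verify. Since $c=b$ also gives $ac=ab$, the conclusion of Theorem~5.1 specializes to: $1+ab$ has Hirano inverse if and only if $1+ba$ has Hirano inverse, which is exactly the asserted equivalence.

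Because the entire content is already carried by Theorem~5.1, there is no genuine obstacle and no induction or nilpotency computation needs to be redone; the only point to check is that setting $c=b$ is a legitimate instance of the theorem, and it clearly is. (This mirrors the way Corollary~4.2 was obtained from Theorem~4.1 by the substitution making $aba=aca$ trivial.) One could phrase the result as recovering the classical Jacobson-Lemma form of the statement from its generalized $aba=aca$ version, but no extra argument beyond the specialization is needed.
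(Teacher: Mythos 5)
Your proposal is correct and matches the paper's own proof exactly: the paper likewise obtains the corollary by taking $c=b$ in Theorem~5.1, so that the hypothesis $aba=aca$ holds trivially. Nothing further is needed.
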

\begin{proof} The proof follows directly from the equality $aba=aba$ and Theorem 5.1.\end{proof}

As is well known, $a\in R$ has strongly Drazin inverse if and only if so does $1-a\in R$ (see~\cite[Lemma 3.3]{W}). But Hirano inveses in a ring are not the case.

\begin{exam} Let $a=\left(
\begin{array}{cc}
0&1\\
-1&0
\end{array}
\right)\in M_2({\Bbb Z}_3)$. Then $a$ has Hirano inverse, but $1-a$ does not have.\end{exam}
\begin{proof} It is easy to verify that $a-a^3=\left(
\begin{array}{cc}
1&1\\
-1&2
\end{array}
\right)\in N(M_2({\Bbb Z}_4))$, but $(1-a)-(1-a)^3=\left(
\begin{array}{cc}
0&1\\
-1&0
\end{array}
\right)\not\in N(M_2({\Bbb Z}_4))$. Therefore we are done by Theorem 3.1.\end{proof}

\begin{prop} Let $R$ be a ring and $a,b\in R$. If $a,b$ have Hirano inverses and $ab=ba=0$, then $a+b$ has Hirano inverse and $$(a+b)^{H}=a^H+b^H.$$\end{prop}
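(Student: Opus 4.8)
The plan is to reduce everything to Theorem 3.1, which says that an element has a Hirano inverse precisely when it is a cube modulo nilpotents. So the first task is to verify that $(a+b)-(a+b)^3\in N(R)$. Since $ab=ba=0$, the cross terms collapse: one computes $(a+b)^2=a^2+b^2$ and then $(a+b)^3=a^3+b^3$. Hence $(a+b)-(a+b)^3=(a-a^3)+(b-b^3)$, and by Theorem 3.1 each summand lies in $N(R)$ because $a$ and $b$ have Hirano inverses.

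The point that needs genuine care is that a sum of two nilpotent elements need not be nilpotent in a noncommutative ring, so I cannot simply conclude $(a-a^3)+(b-b^3)\in N(R)$ from the nilpotency of the two pieces. I would resolve this by observing that the two pieces annihilate each other: writing $x=a-a^3$ and $y=b-b^3$, every term of $xy$ and of $yx$ contains a factor $ab$ or $ba$, so $xy=yx=0$. With orthogonal nilpotents the binomial expansion degenerates to $(x+y)^n=x^n+y^n$, which vanishes once $n$ exceeds both indices of nilpotency. Thus $(a+b)-(a+b)^3\in N(R)$, and Theorem 3.1 gives that $a+b$ has a Hirano inverse.

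For the identity $(a+b)^H=a^H+b^H$, I would first pass to Drazin inverses via Corollary 2.2, so that $a^H=a^D$ and $b^H=b^D$, and it suffices to prove that $c:=a^D+b^D$ is the Drazin inverse of $a+b$. The computational backbone is a set of orthogonality relations forced by $ab=ba=0$: using the standard identity $a^D=a^m(a^D)^{m+1}=(a^D)^{m+1}a^m$ for $m\geq 1$, the hypothesis gives $a^Db=ba^D=ab^D=b^Da=0$, and then $a^Db^D=b^Da^D=0$ follows by the same bookkeeping. Granting these, the three Drazin conditions for $c$ are routine: $(a+b)c=aa^D+bb^D=c(a+b)$; the cross terms in $c(a+b)c$ drop out leaving $a(a^D)^2+b(b^D)^2=a^D+b^D=c$; and $(a+b)-(a+b)^2c=(a-a^2a^D)+(b-b^2b^D)$ is again a sum of orthogonal nilpotents, hence nilpotent. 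By uniqueness of the Drazin inverse this forces $(a+b)^D=c$, and Corollary 2.2 converts this back to $(a+b)^H=a^H+b^H$. Alternatively, one may simply invoke the classical additive formula for Drazin inverses under $ab=ba=0$ and combine it with Corollary 2.2.

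The main obstacle is the noncommutative subtlety flagged above: establishing nilpotency of the relevant sums without assuming the summands commute, which I handle through the orthogonality relations $xy=yx=0$. Everything else is bookkeeping that the hypotheses $ab=ba=0$ make collapse cleanly.
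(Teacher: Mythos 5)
Your proof is correct, but it takes a genuinely different route from the paper's. The paper argues by directly verifying the defining conditions of the Hirano inverse for the explicit candidate $a^H+b^H$: it first derives the orthogonality relations $ab^H=b^Ha=a^Hb=ba^H=0$ from $a^H=a(a^H)^2$ and $b^H=b(b^H)^2$, so that $a,a^H,b,b^H$ all commute, and then checks $(a+b)(a^H+b^H)^2=a^H+b^H$ together with $(a+b)^2-(a+b)(a^H+b^H)=(a^2-aa^H)+(b^2-bb^H)\in N(R)$; this exhibits the inverse and so settles existence and the formula in one stroke, without invoking Theorem 3.1 or the Drazin machinery. You instead split the work: existence comes from the criterion of Theorem 3.1 applied to $(a+b)-(a+b)^3=(a-a^3)+(b-b^3)$, and the formula comes from identifying $a^D+b^D$ as the Drazin inverse of $a+b$ via the three Drazin conditions and uniqueness, then translating back through Corollary 2.2. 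Both arguments are sound and both ultimately rest on the same orthogonality phenomenon (every product of an $a$-side element with a $b$-side element vanishes); the paper's version is shorter and self-contained, while yours is more modular and, on the one delicate point, more explicit --- you justify the nilpotency of the sum via $xy=yx=0$ and $(x+y)^n=x^n+y^n$, whereas the paper merely notes that $(a^2-aa^H)$ and $(b^2-bb^H)$ commute and leaves the conclusion implicit.
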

\begin{proof} Clearly, $ab^H=ab(b^H)^2=0$ and $ba^H=ba(a^H)^2=0.$ Likewise, $a^Hb=b^Ha=0$. Thus, we see that
$a,a^H,b,b^H$ commute. Thus,
$(a^2-aa^H)(b^2-bb^H)=(b^2-bb^H)(a^2-aa^H).$
Also we have $$(a+b)(a^H+b^H)^2=(aa^H+bb^H)(a^H+b^H)=a^H+b^H$$
and $$(a+b)^2-(a+b)(a^H+b^H)=(a^2-aa^H)+(b^2-bb^H)\in N(R).$$
Therefore $(a+b)^H=a^H+b^H$, as asserted,.\end{proof}

It is convenient at this stage to include the following. 

\begin{thm} Let $R$ be a ring, and let $a,b\in R$. If $ab$ has strongly inverse and $a^2=b^2=0$, then $a+b$ has Hirano inverse and $(a+b)^{H}=a(ba)^{H}+b(ab)^{H}$.
\end{thm}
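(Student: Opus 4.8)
The plan is to reduce everything to the two elements $s:=ab$ and $t:=ba$, whose decisive feature under $a^2=b^2=0$ is mutual annihilation: $st=ab\cdot ba=ab^2a=0$ and $ts=ba\cdot ab=ba^2b=0$. Since $(a+b)^2=a^2+ab+ba+b^2=s+t$, Theorem 2.4 tells me that $a+b$ has a Hirano inverse exactly when $s+t$ has a strongly Drazin inverse, and in that case $(a+b)^H=(a+b)(s+t)^{sD}$. So the whole problem becomes understanding the strongly Drazin inverse of $s+t$, and the hypothesis $a^2=b^2=0$ enters precisely through the orthogonality $st=ts=0$.

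First I would record that ``$ab$ has a strongly Drazin inverse'' is equivalent to $s-s^2=ab-(ab)^2\in N(R)$: writing $ab=e+w$ with $e^2=e$, $w\in N(R)$, $ew=we$ via \cite[Lemma 2.2]{W} gives $ab-(ab)^2=w(1-2e-w)\in N(R)$, and conversely \cite[Lemma 2.1]{CS} produces an idempotent in ${\Bbb Z}[ab]$ near $ab$. Next, the Cline-type identity $ba-(ba)^2=b(1-ab)a$ together with its powers $(ba-(ba)^2)^{n}=b\big(ab-(ab)^2\big)^{n-1}(1-ab)a$ shows that nilpotency of $ab-(ab)^2$ forces nilpotency of $ba-(ba)^2$; applying the same equivalence to $ba$, I conclude that $ba$ also has a strongly Drazin inverse. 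Hence both $s^{sD}=(ab)^{sD}$ and $t^{sD}=(ba)^{sD}$ exist, and by Corollary 2.2 they coincide with the respective Hirano inverses, $s^{sD}=(ab)^H$ and $t^{sD}=(ba)^H$.

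Now I would treat the sum. Because all cross products vanish (for instance $s^2t=s(st)=0$ and $s^2t^2=s(st)t=0$), the identity $(s+t)-(s+t)^2=(s-s^2)+(t-t^2)$ displays $(s+t)-(s+t)^2$ as a sum of two commuting nilpotents, so it lies in $N(R)$ and $s+t=(a+b)^2$ has a strongly Drazin inverse; by Theorem 2.4 this already yields that $a+b$ has a Hirano inverse. To identify the inverse I would verify that $u:=s^{sD}+t^{sD}$ is the strongly Drazin inverse of $s+t$. Using $s^{sD}=s(s^{sD})^2$, $t^{sD}=t(t^{sD})^2$ and $st=ts=0$ one gets $st^{sD}=ts^{sD}=s^{sD}t=t^{sD}s=0$, and from these the defining relations $(s+t)u=u(s+t)=ss^{sD}+tt^{sD}$, $u(s+t)u=u$, and $(s+t)-(s+t)u=(s-ss^{sD})+(t-tt^{sD})\in N(R)$ all fall out. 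Thus $(s+t)^{sD}=s^{sD}+t^{sD}$.

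Finally I would assemble the formula. From Theorem 2.4, $(a+b)^H=(a+b)(s+t)^{sD}=(a+b)(s^{sD}+t^{sD})$, which expands to $as^{sD}+at^{sD}+bs^{sD}+bt^{sD}$. The two unwanted terms vanish thanks to $a^2=b^2=0$: since $s^{sD}=s(s^{sD})^2$ we have $as^{sD}=a(ab)(s^{sD})^2=a^2b(s^{sD})^2=0$, and likewise $bt^{sD}=b(ba)(t^{sD})^2=b^2a(t^{sD})^2=0$. Therefore $(a+b)^H=at^{sD}+bs^{sD}=a(ba)^H+b(ab)^H$, as asserted. The step I expect to be the main obstacle is the additive identity $(s+t)^{sD}=s^{sD}+t^{sD}$, since it rests entirely on the two-sided orthogonality $st=ts=0$ and requires careful bookkeeping that every cross term such as $st^{sD}$ and $s^{sD}t$ collapses; the other delicate point is transferring the strongly Drazin inverse from $ab$ to $ba$, where the Cline-type power estimate is needed.
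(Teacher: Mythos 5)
Your argument is correct, and it reaches the stated formula by a genuinely different route from the paper's. The paper guesses the candidate $x=a(ba)^{D}+b(ab)(ab)^{D}$ up front (citing Wang's Cline-type result to get the strongly Drazin invertibility of $ba$ from that of $ab$) and then verifies the three defining conditions of a Hirano inverse for $a+b$ directly; the only substantive computation is that $(a+b)^2-(a+b)x=(ab-(ab)(ab)^D)+(ba-(ba)(ba)^D)$ is a sum of two mutually annihilating, hence commuting, nilpotents. You instead funnel everything through Theorem 2.4: since $(a+b)^2=ab+ba$ with $ab\cdot ba=ba\cdot ab=0$, the problem becomes an additive statement for strongly Drazin inverses of orthogonal elements, which you prove in the form $(s+t)^{sD}=s^{sD}+t^{sD}$, and the formula then falls out of $(a+b)^H=(a+b)\bigl((a+b)^2\bigr)^{sD}$ because $as^{sD}=a^2b(\cdots)=0$ and $bt^{sD}=b^2a(\cdots)=0$. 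The underlying orthogonality bookkeeping is the same in both proofs, but your version buys two things: it is self-contained where the paper cites \cite{W} (you transfer the strongly Drazin inverse from $ab$ to $ba$ via the identity $(ba-(ba)^2)^{n}=b(ab-(ab)^2)^{n-1}(1-ab)a$, which checks out by induction), and it \emph{derives} the inverse rather than verifying a guessed candidate --- which incidentally sidesteps the paper's small internal discrepancy between the element $x=a(ba)^D+b(ab)(ab)^D$ it works with and the value $a(ba)^D+b(ab)^D$ it asserts at the end. All the individual steps you flag as delicate (the vanishing of the cross terms $st^{sD}$, $s^{sD}t$, etc., and the identification $s^{sD}=(ab)^H$ via Corollary 2.2) do go through as you describe.
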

\begin{proof} Suppose that $ab$ has strongly inverse and $a^2=b^2=0$. Then $ba$ has strongly inverse by~\cite[Theorem 3.1]{W}. It is obvious that $ab$ and $ba$ have Drazin inverses. Let $x=a(ba)^{D}+b(ab)(ab)^{D}$. Then we easily check that $(a+b)x=x(a+b)$ and
$(a+b)x^2=x$. We easily check that $$\begin{array}{lll}
(a+b)^2-(a+b)x&=&ab+ba-(ab)(ab)^D-(ba)(ba)^D\\
&=&(ab-(ab)(ab)^D)+(ba-(ba)(ba)^D)\\
&\in &N(R),
\end{array}$$ as $(ab-(ab)(ab)^D)(ba-(ba)(ba)^D)=(ba-(ba)(ba)^D)(ab-(ab)(ab)^D)$ $=0.$ Therefore $a+b\in R$ has Hirano inverse, and that $(a+b)^H=a(ba)^{D}+b(ab)^{D}=a(ba)^{H}+b(ab)^{H}$.\end{proof}

\begin{exam} Let $a=\left(
\begin{array}{cc}
0&1\\
0&0
\end{array}
\right), b=\left(
\begin{array}{cc}
0&0\\
2&0
\end{array}
\right)\in M_2({\Bbb Z}_3)$. Then $ab=\left(
\begin{array}{cc}
2&0\\
0&0
\end{array}
\right)\in M_2({\Bbb Z})$ has Hirano inverse, $a^2=b^2=0$. But $a+b=\left(
\begin{array}{cc}
0&1\\
2&0
\end{array}
\right)\in M_2({\Bbb Z}_3)$ has no Hirano inverse.\end{exam}

\begin{exam} Let $a=\left(
\begin{array}{cc}
0&1\\
0&0
\end{array}
\right), b=\left(
\begin{array}{cc}
0&0\\
1&0
\end{array}
\right)\in M_2({\Bbb Z})$. Then $ab=\left(
\begin{array}{cc}
1&0\\
0&0
\end{array}
\right)\in M_2({\Bbb Z})$ has strongly Drazin inverse, $a^2=b^2=0$. But $a+b=\left(
\begin{array}{cc}
0&1\\
1&0
\end{array}
\right)$ has no strongly Drazin inverse. In this case, $(a+b)^H=\left(
\begin{array}{cc}
0&1\\
1&0
\end{array}
\right).$\end{exam}

\vskip10mm

\end{document}